\theoremstyle{remark}
\newtheorem{remark}{Remark}
\theoremstyle{plain}
\newtheorem{theorem}{Theorem}
\newtheorem{corollary}{Corollary}
\newtheorem{lemma}[theorem]{Lemma}
\newtheorem{prop}[theorem]{Proposition}
\theoremstyle{definition}
\newtheorem{definition}{Definition}
\newtheorem{notation}{Notation}
\newcommand{\ind}{\operatorname{ind}}
\newcommand{\St}{\operatorname{St_{(2)}}}
\title{Arnold Strangeness of surface immersions}
\author{Noboru Ito}
\address{Department of Mathematics, Faculty of Engineering, Shinshu University, 4-17-1 Wakasato, Nagano 380-8553, Japan}
\email{nito@shishu-u.ac.jp}
\author{Hiroki Mizuno}
\address{Department of Science and Technology, Graduate School of Medicine, Science and Technology, Shinshu University, 3-1-1 Asahi, Matsumoto, Nagano 390-8626, Japan}
\email{22hs602j@shinshu-u.ac.jp}
\date{June 29, 2025}
\keywords{oriented surface; generic immersion; generic regular homotopy; Arnold strangeness invariant; mapping degree; sphere eversion
}
\subjclass{57N35, 57R42}
\begin{document}
\begin{abstract}
It is known that for any smooth sphere eversion, the number of quadruple point jumps is always odd~\cite{NelsonBanchoff1981}.      
In this paper, we define an integer-valued function that detects and classifies jumps involving quadruple points and triple-line tangencies.      
Our function provides a higher-dimensional analogue of the Arnold strangeness invariant for plane curves. It classifies quadruple point jumps into the five geometrically distinct cases 
based on coorientation data and reflects finer geometric features for generic immersions of closed surfaces into $\mathbb{R}^3$.  
\end{abstract}
\maketitle

\section{Introduction}\label{Intro}
This paper is motivated by the question of how quadruple points arise during sphere eversions, following Smale's classical result~\cite{Smale1958} that the unit sphere in $\mathbb{R}^3$ is regular homotopic to its reflection.   In particular, for any smooth sphere eversion, the number of quadruple point jumps is always odd~\cite{NelsonBanchoff1981}.  
This fundamental result suggests the existence of a topological invariant that detects and quantifies the algebraic contributions of such  jumps.  
The integer values arise naturally when one aims to detect the five geometrically distinct cases of quadruple point jumps classified by Goryunov~\cite{Goryunov1997}.  In contrast to the Arnold strangeness invariant of plane curves, which changes by $\pm 1$ under each triple point crossing, our integer-valued function encodes finer geometric information since both the domain and the target dimensions are higher.   Moreover, the invariant allows one to formulate explicit lower bounds on the number of quadruple point jumps occurring during sphere eversions as in Section~\ref{sec:Appl}.    

We define an integer-valued invariant $\St$ for generic surface immersions $\Sigma \looparrowright \mathbb{R}^3$, which changes  under singularity jumps.  In the case of sphere immersions, $\St$ refines a higher-dimensional analogue of the Arnold strangeness invariant~  \cite{Arnold1994Book}, obtained by lifting both the domain and the target dimensions by one. 
  
Following Goryunov~\cite{Goryunov1997}, we classify quadruple point jumps~\textup{(Q)} into the five geometrically distinct cases, denoted by $Q^4$, $Q^3$, and $Q^2$.  We coorient $Q^4$ and $Q^3$ in the direction of increasing the number of outward-pointing faces of the local tetrahedron, while there is no local way to distinguish between the two sides of the $Q^2$. We begin with our main result. 
\begin{theorem}\label{thmSurface}
Let $\Sigma$ be a closed oriented surface and let $S$ be a generic surface immersion $S : \Sigma \looparrowright \mathbb{R}^3$; $T(S)$ the set of triple points of $S$; and $\ind(t)$ the Alexander numbering of the triple point $t$, defined as the average over the indices of the eight regions adjacent to $t$. 
Then the value
\[\St(S)= \sum_{t \in T(S)} \ind(t)
\]
behaves under quadruple point jumps  \textup{(Q)} as in Table~\ref{table:MainDiff}.  
\begin{table}[h!]
\caption{Changes in $\St$ under each case of quadruple point jumps~\textup{(Q)}.}\label{table:MainDiff}
{\rm
\begin{tabular}{|c|c|c|}\hline
Jump cases & Coorientations of local tetrahedra (\# outward sheets) & Changes in $\St$ \\ \hline
Negative $Q^4$ & $4\to0$ & $+4$ \\ \hline
Negative $Q^3$ & $3\to1$ & $+2$ \\ \hline
 $Q^2$ & $2\to2$ & $0$ \\ \hline
Positive $Q^3$ & $1\to3$ & $-2$ \\ \hline
Positive $Q^4$ & $0\to4$ & $-4$ \\ \hline
\end{tabular}
}
\end{table}
For \textup{(T)}, the change is an integer $2n$ that is $2 \ind(t)$ of the   triple points   created during a jump \textup{(T)}.
Moreover, $\St$ remains unchanged under the other types of singularity jumps:  \textup{(E)} and \textup{(H)}.    
\end{theorem}


Since it is quite natural to consider mathematical tools, i.e. topological invariants, to detect and classify how quadruple point jumps, classified into five cases, and triple-line tangency jumps (\textup{T}) occur during the eversion process, we provide its application in  Section~\ref{sec:Appl}.

The plan of this paper is as follows.  
In Section~\ref{sec:Def}, we prepare definitions to define our invariant $\St$, following Goryunov~\cite{Goryunov1997}.   
In Section~\ref{sec:Proof}, we prove the main result (Theorem~\ref{thmSurface}). In Section~\ref{sec:Appl}, we present  applications of our main  result.

\section{Definitions}\label{sec:Def}
\begin{definition}
A smooth immersion from a closed oriented surface into $\mathbb{R}^3$ is said to be \emph{generic} if its image locally exhibits only transverse intersections of two or three smooth sheets.
\end{definition}

As long as there is no risk of confusion, we simply identify a generic immersion with its image. 

Let $\mathcal{M}:=C^\infty (\Sigma, \mathbb{R}^3)$ denote the space of smooth maps from a closed oriented surface $\Sigma$ into $\mathbb{R}^3$; let $\mathcal{F} \subset \mathcal{M}$ be the subspace of smooth immersions; and let $\Omega \subset \mathcal{F}$ denote the open dense subspace of generic immersions.  The complement  $\Delta=\mathcal{F} \setminus \Omega$ is called the \emph{discriminant}.  A path $\gamma : [0, 1] \to \mathcal{F}$ from $\gamma(0)$ to $\gamma(1)$ is said to be \emph{generic} if $\gamma$ intersects $\Delta$ transversely at finitely many points.   According to Goryunov~\cite{Goryunov1997}, each such intersection point corresponds to a singular map at which one of the four codimension-one jumps, namely \textup{(E)}, \textup{(H)}, \textup{(T)}, or  \textup{(Q)} occurs: 

\begin{itemize}
  \item[\textup{(E)}] elliptic tangency of two sheets (Figure~\ref{fig:Emove});
  \item[\textup{(H)}] hyperbolic tangency of two sheets (Figure~\ref{fig:Hmove});
\item[\textup{(T)}] (for `triple'), tangency of the line of intersection of two sheets to another sheet (Figure~\ref{fig:Tmove});
\item[\textup{(Q)}] (for `quadruple'), four sheets intersecting at the same point (Figure~\ref{fig:Qmove}).
\end{itemize} 
 \begin{figure}[htbp] 
    \includegraphics[width=10cm]{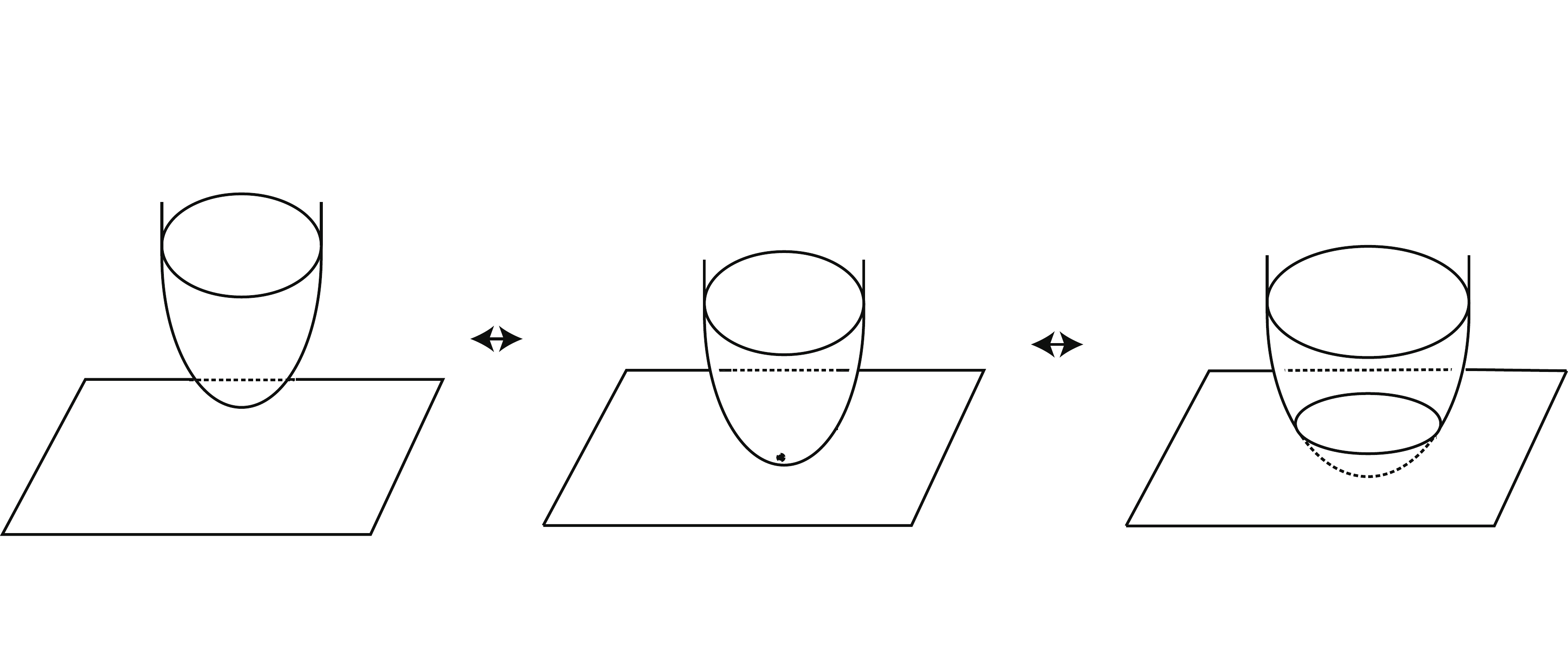} 
\vspace{-5mm}    \caption{\textup{(E)} : Elliptic tangency of two sheets.}
    \label{fig:Emove}
 \end{figure}
  \begin{figure}[htbp] 
     \includegraphics[width=10cm]{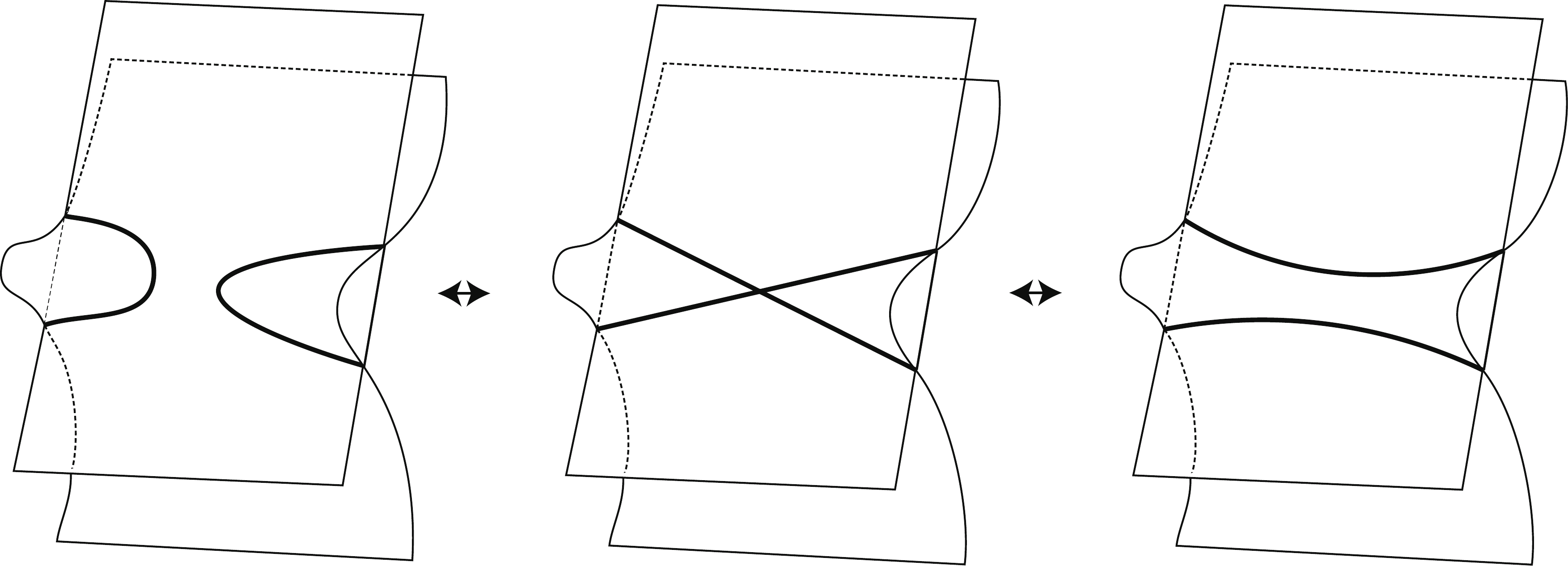} 
     \caption{\textup{(H)} : Hyperbolic tangency of two sheets.}
     \label{fig:Hmove}
  \end{figure}
  \begin{figure}[htbp] 
     \includegraphics[width=10cm]{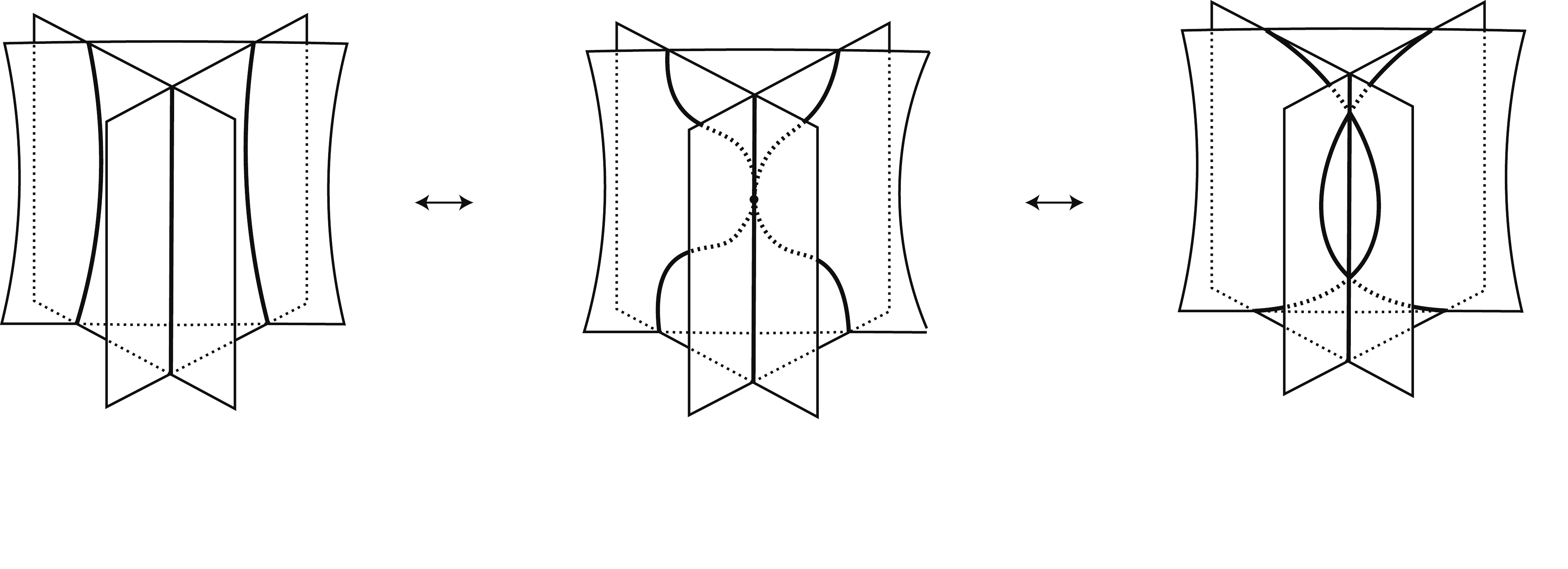} 
     \vspace{-4mm}    
     \caption{\textup{(T)} : Tangency of the line of intersection of two sheets to another sheet.}
     \label{fig:Tmove}
  \end{figure}
  \begin{figure}[htbp] 
     \includegraphics[width=10cm]{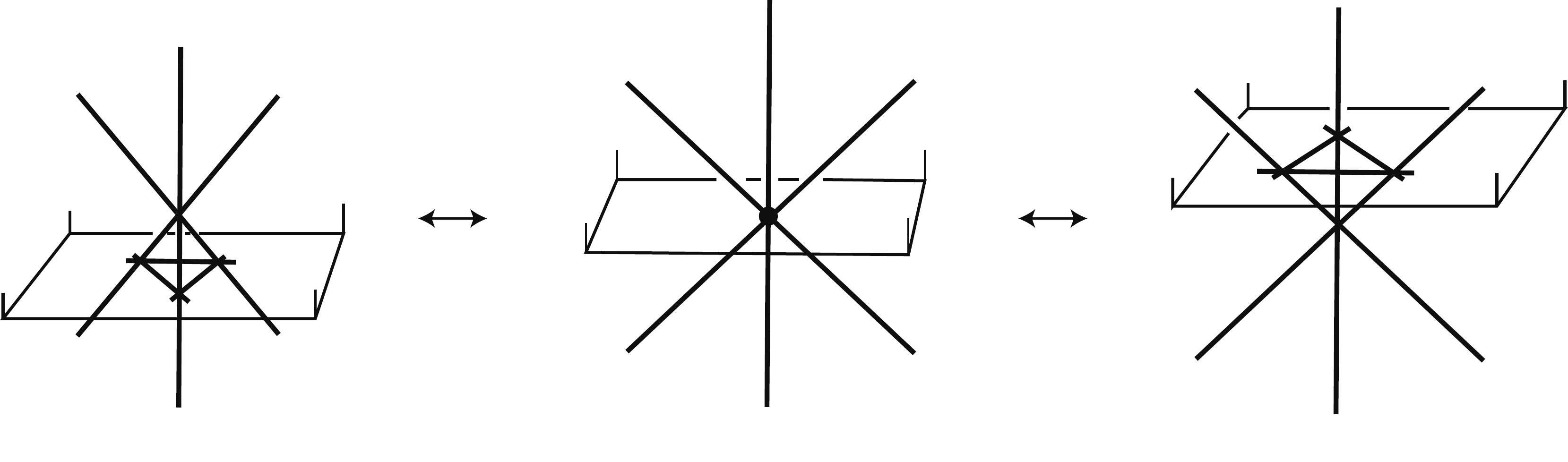} 
     \caption{\textup{(Q)} : A sheet crossing over a triple point formed by three other sheets. For the quadruple point, four sheets intersect at the same point.}
     \label{fig:Qmove}
  \end{figure}  
  
\begin{definition}
Two generic immersions $f$ and $g$ in $\Omega$ are said to be \emph{generically  regularly homotopic} if there is a generic path in $\mathcal{F}$ connecting them.  Equivalently, $f$ and $g$ are connected by a \emph{generic regular homotopy} if they can be transformed by a finite sequence of diffeomorphisms and the four local jumps \textup{(E)}, \textup{(H)}, \textup{(T)}, and \textup{(Q)}.   
\end{definition}
Hence, we study generic immersions under generic regular homotopy.  The formulation we adopt here reflects our focus on Smale's sphere eversion.  In contrast to the paper ~\cite{Goryunov1997}, which includes additional local moves involving pinch points, we restrict our attention to those singularities encountered in generic one-parameter families of immersions.  That is, our category of interest is generated entirely by generic immersions and generic regular homotopies arising from the four codimension-one jumps and diffeomorphisms.   

Given a generic immersion $S : \Sigma \looparrowright \mathbb{R}^3$, $\mathbb{R}^3$ is decomposed into the image $S(\Sigma)$ and its complement.  Moreover, the image of a generic immersion $S$ consists of \emph{sheets} (locally smooth pieces), \emph{double lines} (pairwise intersections of sheets), and \emph{triple points} (triple intersections of sheets).    
A connected  component of the complement of the image of a generic immersion is called a  \emph{region}.    
The coorientation of a generic immersion 
 is induced by the orientation of the domain surface. 
For a generic immersion, we assign numbers to the set of regions  based on its coorientation (Figure~\ref{fig:coori}).   
\begin{figure}[htbp] 
\includegraphics[width=5cm]{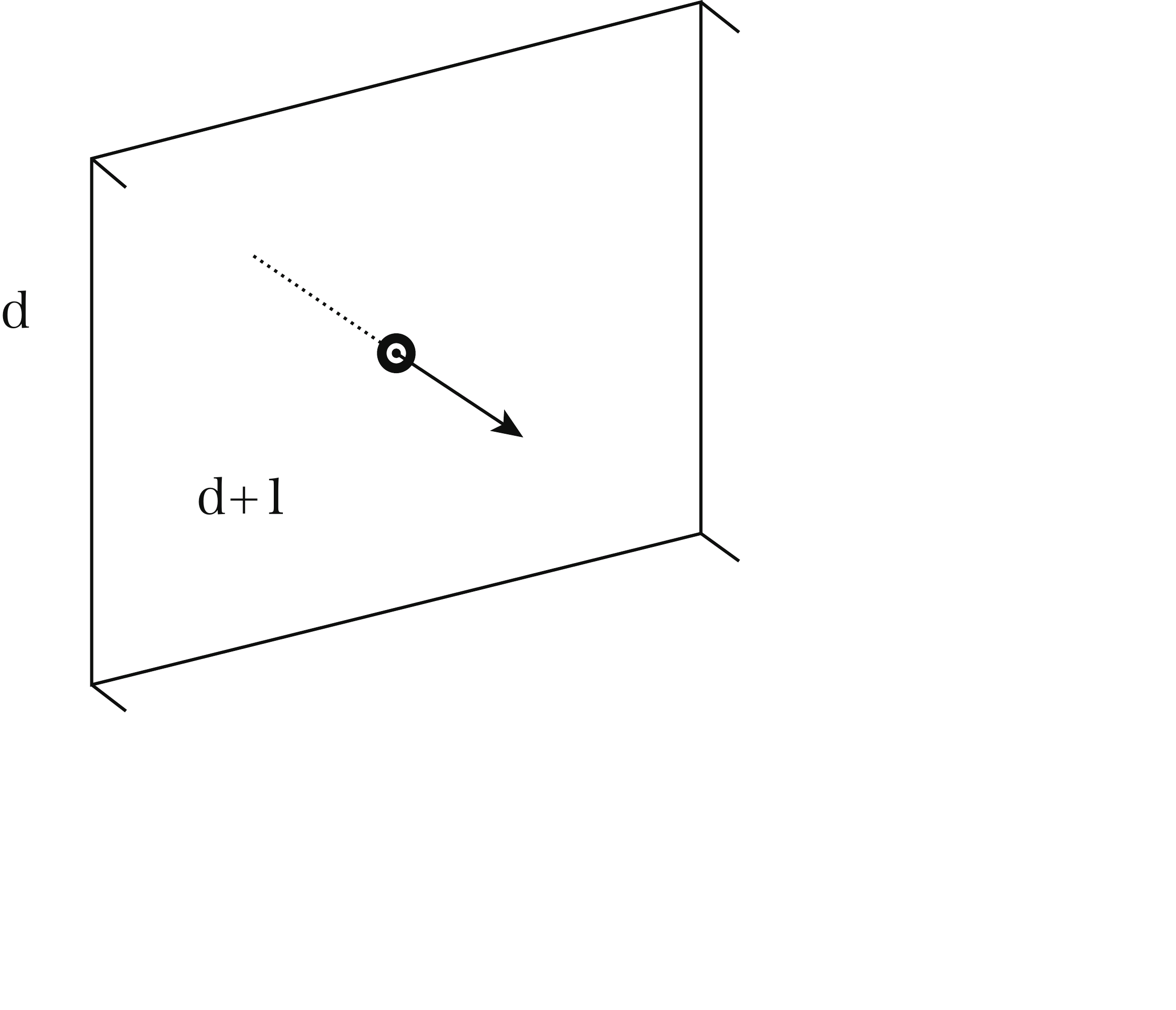}
    \vspace{-1cm}
   \caption{A coorientation of a sheet and indices.  The direction of the coorientation of the sheet is indicated by short segments.}
   \label{fig:coori}
\end{figure}
More precisely, the assignment proceeds as follows.  First, we assign $-\frac{3}{2}$ to the region containing the point at $\infty$.  Then, we increase or decrease the number along the coorientation.  In this way, we assign numbers to the complement of the image of generic immersion, and this defines the map $\ind : \{ \textrm{regions} \} \to \mathbb{Z}-\frac{1}{2}$, called the \emph{Alexander numbering}.     
\begin{definition}\label{def:opposite}
Any triple point $t$ is formed by three sheets (see Figure~\ref{fig:8-regions}).  Then two regions adjacent to a triple point $t$ are said to be \emph{opposite} if they are separated by exactly three sheets. 

For example, when we consider the triple point at the origin in $\mathbb{R}^3$ formed by the three coordinate planes $x=0$, $y=0$, and $z=0$, the regions containing  $(-1,-1,-1)$ and $(1,1,1)$ are opposite to each other.
\end{definition}
\begin{definition}\label{def:ind}
Let $\Sigma$ be a closed oriented surface,  
let $S$ be a generic immersion $S \colon \Sigma \looparrowright \mathbb{R}^3$, and let $T(S)$ be the set of triple points of $S$.   
We define a function $T(S) \to  \mathbb{Z};$ $t \mapsto \ind(t)$   
by averaging the values of $\ind$ over the eight regions that are locally adjacent to~$t$ (see Figure~\ref{fig:8-regions}, Lemma~\ref{lem:ind}).   The value $\ind(t)$ is called the \emph{index} of a triple point $t$.       
\end{definition}
The definition of $\ind(t)$ will be used in the definition of the invariant $\St$.  
We note that this definition is equivalent to defining $\ind(t)$ as the average of the values of $\ind$ over a pair of opposite regions adjacent to the triple point $t$; although there are four such choices, they all give the same result.  This fact will be useful when computing the change in $\St$ under the jump \textup{(Q)}. 

While the function $\St$ is integer-valued, the use of a half-integer valued Alexander numbering, defined as the average across regions, provides a convenient framework for the computation in Section~\ref{sec:Proof}.  

\section{Proofs}\label{sec:Proof}
In order to prove Theorem~\ref{thmSurface},  
we consider  Propositions~\ref{Prop:Tmove} and \ref{Prop:Qmove}, and Lemmas~\ref{lem:ind} and \ref{lem:oprg}, which  concern the jumps \textup{(T)} and \textup{(Q)}---singularity jumps involving triple points.   
\begin{prop}\label{Prop:Tmove}
    Let $t$ be a triple point that appears or disappears in a jump \textup{(T)}.  
    Then $\St$ changes by $2\ind(t)$ under the jump \textup{(T)}. 
  \end{prop}
  \begin{proof}
    To prove this proposition, it suffices to compare the difference between $\St(S_0)$ before and $\St(S_1)$ after the jump \textup{(T)}.  The two triple points $t$ and $t'$ that appear in the jump \textup{(T)} have the same value of $\ind$   
    (see Figure~\ref{fig:tript_Tmove}).  
\begin{figure}[htbp] 
       \includegraphics[width=8cm]{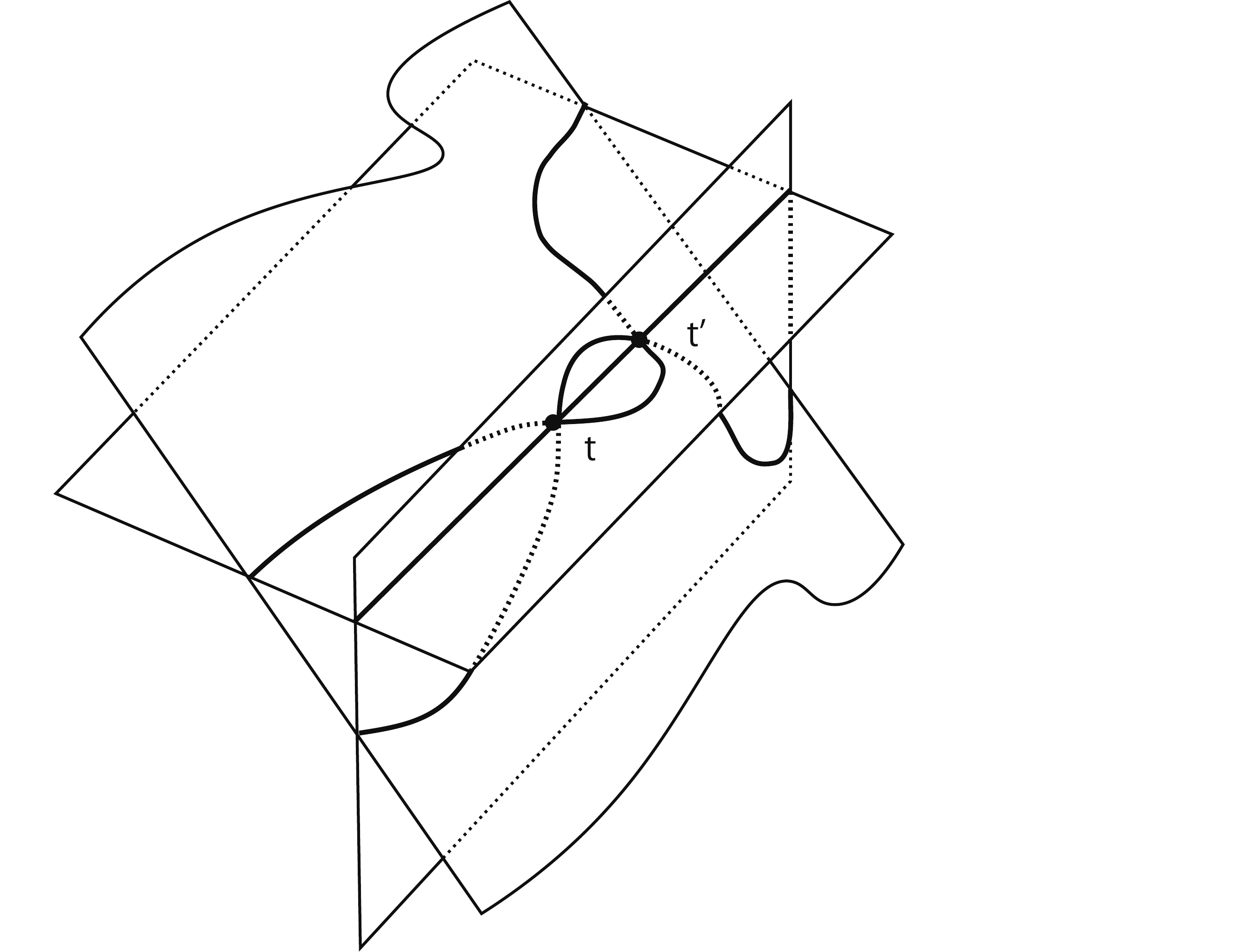} 
       \caption{Two triple points $t$ and $t'$ that appear in a single jump \textup{(T)}.}
       \label{fig:tript_Tmove}
    \end{figure}
In other words, the following holds 
    \begin{align*}
     \St(S_1) - \St(S_0) &= \left(\ind(t) + \ind(t')+\St(S_0)\right)-\St(S_0) \\
     &= \ind(t) + \ind(t') \\ 
     &= \ind(t) + \ind(t) \\
   &=2\ind(t).    
\end{align*}
  \end{proof} 
  \begin{lemma}\label{lem:ind}
     Let $d$ be the minimum of the indices of the eight regions adjacent to a triple point $t$.  Then \[\ind(t) = d + \frac{3}{2}, \]
and in particular,  
\[
\ind(t) \in \mathbb{Z}.  
\]
  \end{lemma}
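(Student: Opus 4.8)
The plan is to work in standard local coordinates at the triple point and reduce everything to a finite computation over the eight octants. Since $t$ is a triple point of a generic immersion, a neighborhood of $t$ is modeled by the three coordinate planes $\{x=0\}$, $\{y=0\}$, $\{z=0\}$ in $\mathbb{R}^3$, and the eight regions locally adjacent to $t$ are exactly the eight octants, which I index by sign vectors $(\epsilon_x,\epsilon_y,\epsilon_z)\in\{\pm 1\}^3$.

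First I would record how the Alexander numbering behaves across each of the three sheets. By definition $\ind$ increases by $1$ when one crosses a sheet along its coorientation and decreases by $1$ when crossing against it. Writing $a_x,a_y,a_z\in\{\pm 1\}$ for the signs recording the coorientation direction of each coordinate plane, the well-definedness of $\ind$ as a function on regions forces the index of the octant $(\epsilon_x,\epsilon_y,\epsilon_z)$ to take the additive form
\[
\ind(\epsilon_x,\epsilon_y,\epsilon_z)=b+\tfrac{1}{2}\bigl(a_x\epsilon_x+a_y\epsilon_y+a_z\epsilon_z\bigr)
\]
for a single constant $b$; indeed, crossing the plane $\{x=0\}$ from $\epsilon_x=-1$ to $\epsilon_x=+1$ changes this expression by exactly $a_x$, as required, and similarly for the other two planes.

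Next I would read off the two quantities in the statement from this formula. Averaging over the eight octants, each of $\sum_{\epsilon}\epsilon_x$, $\sum_{\epsilon}\epsilon_y$, $\sum_{\epsilon}\epsilon_z$ vanishes (four $+1$'s and four $-1$'s), so the average $\ind(t)$ collapses to $b$. The quantity $a_x\epsilon_x+a_y\epsilon_y+a_z\epsilon_z$ is a sum of three $\pm 1$'s, hence odd and lying in $\{-3,-1,1,3\}$; its minimum value $-3$ is attained precisely at the octant with $\epsilon_i=-a_i$ for all $i$, giving $d=b-\tfrac{3}{2}$. Combining the two computations yields $\ind(t)=b=d+\tfrac{3}{2}$.

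Finally, integrality is immediate: since the Alexander numbering takes values in $\mathbb{Z}-\tfrac12$, we have $d\in\mathbb{Z}-\tfrac12$, and hence $\ind(t)=d+\tfrac32\in\mathbb{Z}$. The only point that requires care—and which I regard as the main obstacle—is justifying the additive formula above, namely that the eight local index values are governed by a single base value $b$ together with the three coorientation signs. This is where one must invoke that $\ind$ is a genuinely well-defined function on the set of regions, so that the index increment across each sheet is path-independent rather than merely a locally prescribed rule. Once this consistency is in hand, the remaining steps are the elementary octant bookkeeping described above.
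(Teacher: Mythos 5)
Your proof is correct and takes essentially the same approach as the paper's: both determine the indices of the eight regions adjacent to $t$ from the coorientations of the three sheets, then reduce the claim to a finite computation (average equals minimum plus $\tfrac{3}{2}$), with integrality following from $d \in \mathbb{Z}-\tfrac{1}{2}$. Your sign-vector formula $b+\tfrac{1}{2}(a_x\epsilon_x+a_y\epsilon_y+a_z\epsilon_z)$ is simply an algebraic rendering of the index pattern $d, d+1, d+1, d+1, d+2, d+2, d+2, d+3$ that the paper reads off from Figure~\ref{fig:8-regions}.
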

\begin{proof}
By using the coorientation of the three sheets that form the triple point $t$, the indices of the eight regions adjacent to $t$ are automatically determined.  Therefore, the indices of regions adjacent to $t$ are $d, d+1, d+1, d+1, d+2, d+2, d+2,$ and $d+3$ (see Figure~\ref{fig:8-regions}).  
\begin{figure}[htbp] 
   \includegraphics[width=8cm]{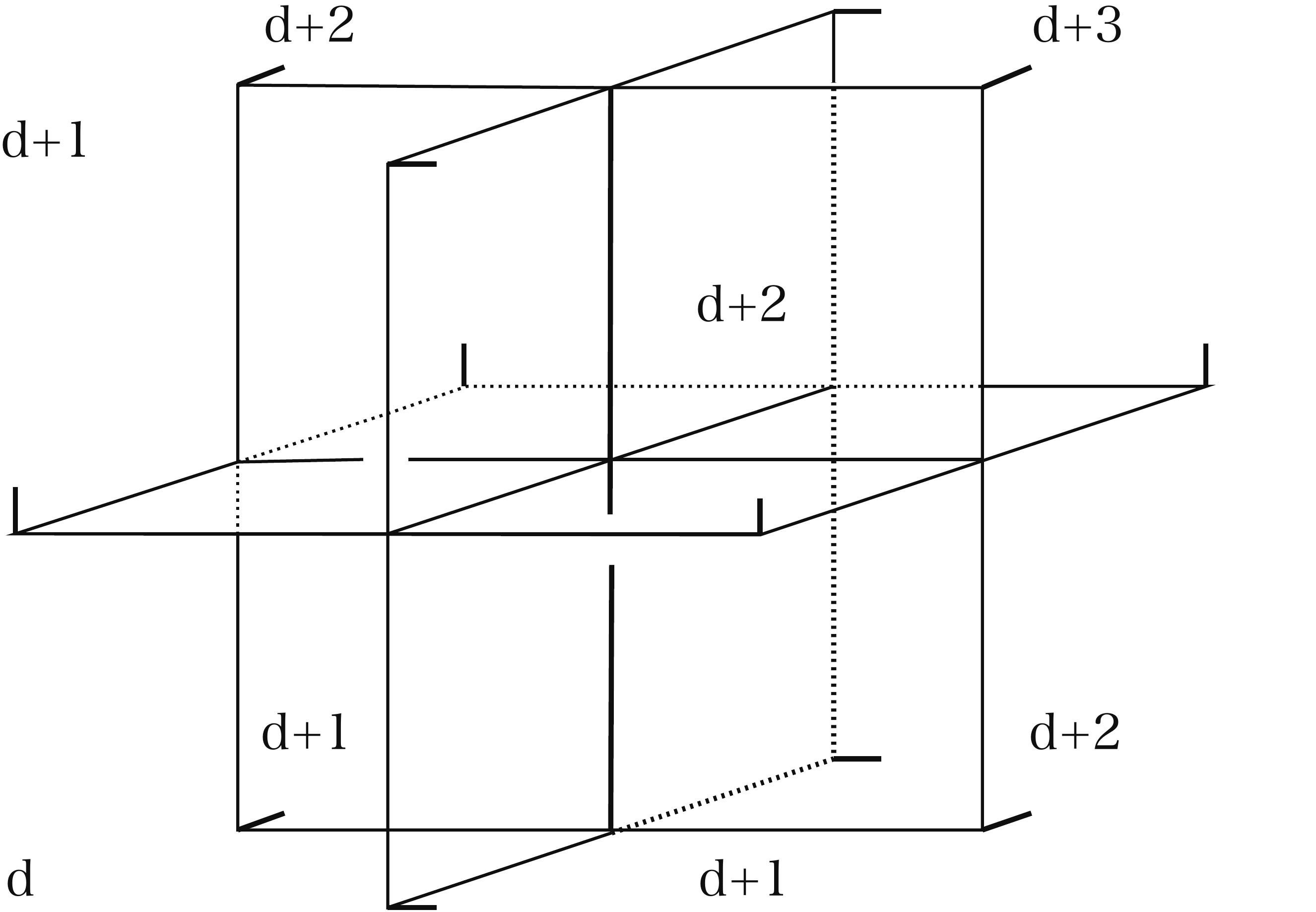} 
   \caption{The indices of the regions adjacent to $t$ are $d, d+1, d+1, d+1, d+2, d+2, d+2,$ and $d+3$.}
   \label{fig:8-regions}
\end{figure}
    Thus, 
    \[
    \ind(t) = \frac{1}{8} \left(d + d+1 + d+1 + d+1 + d+2 + d+2 + d+2 + d+3 \right) = d + \frac{3}{2}.
    \]
To show this is always an integer, recall  that we assign $-\frac{3}{2}$ to the region containing the point at $\infty$.  
Since $d \in \mathbb{Z}-\frac{1}{2}$, the value $\ind(t)=d+\frac{3}{2}$ lies in $\mathbb{Z}$. 
  \end{proof}
Before discussing the change of the values of $\St$ under any  jump \textup{(Q)}, let us mention Lemma~\ref{lem:oprg} that will help in calculating the difference in $\St$. 
It is technically convenient when  analyzing the changes under the jump \textup{(Q)}, to express the value $\ind(t)$ in terms of the indices of the opposite regions $r$ and $\hat{r}$  rather than using the minimal value.  
\begin{lemma}\label{lem:oprg}
The difference in $\ind$ values between two opposite regions adjacent to a triple point is determined by the coorientation of the three sheets that form the triple point.
\end{lemma}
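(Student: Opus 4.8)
The plan is to reduce the statement to a purely local computation at the triple point, using the defining rule of the Alexander numbering that $\ind$ changes by exactly $\pm 1$ across each sheet, with the sign dictated by the coorientation of that sheet. First I would fix the standard local model of a triple point: after a local diffeomorphism the three sheets become the coordinate planes $\{x=0\}$, $\{y=0\}$, $\{z=0\}$ meeting at the origin, so that the eight adjacent regions correspond to the eight open octants and each region is recorded by its triple of signs $(s_1,s_2,s_3)\in\{+,-\}^3$ relative to the three coorientations. In this model each coorientation singles out a positive and a negative side of the corresponding sheet, and crossing that sheet from its negative to its positive side increases $\ind$ by $1$ (see Figure~\ref{fig:coori}).

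Next I would invoke Definition~\ref{def:opposite}: two opposite regions $r$ and $\hat{r}$ are separated by exactly three sheets, which in the local model means their sign triples are antipodal, $(s_1,s_2,s_3)$ and $(-s_1,-s_2,-s_3)$. Since $\ind$ is a well-defined function on the set of regions, the difference $\ind(\hat{r})-\ind(r)$ does not depend on the chosen path; and because $r$ and $\hat{r}$ lie on opposite sides of every sheet, any path realizing their separation crosses each of the three sheets exactly once. Summing the three contributions, each equal to $+1$ or $-1$ according to whether that crossing runs along or against the coorientation, gives
\[
\ind(\hat{r}) - \ind(r) = \varepsilon_1 + \varepsilon_2 + \varepsilon_3, \qquad \varepsilon_i \in \{+1,-1\},
\]
where $\varepsilon_i$ is the sign determined by the coorientation of the $i$-th sheet together with the side on which $r$ sits. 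As each $\varepsilon_i$ depends only on the coorientation data, the right-hand side, and hence the difference, is determined by the coorientations of the three sheets. Compatibly with Lemma~\ref{lem:ind} and Figure~\ref{fig:8-regions}, this value lies in $\{-3,-1,1,3\}$: the pair of globally extreme octants gives $\pm 3$, while the remaining three opposite pairs give $\pm 1$.

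The main point to be careful about, rather than a genuine obstacle, is the consistency of the sign conventions: I must check that the local assignment of $\pm 1$ across each sheet is exactly the restriction of the global Alexander numbering fixed by the normalization at $\infty$, so that the three local contributions really do add up to the global index difference. Once the crossing rule is pinned down as in Figure~\ref{fig:coori}, the remainder is the elementary count above, and the path-independence of $\ind$ guarantees that the particular route from $r$ to $\hat{r}$ is immaterial.
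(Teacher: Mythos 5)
Your proof is correct, but it takes a genuinely different route from the paper's. The paper's proof leans on Lemma~\ref{lem:ind}: it takes as given that the eight adjacent regions carry the indices $d, d+1, d+1, d+1, d+2, d+2, d+2, d+3$, reads off from Figure~\ref{fig:8-regions} which regions are opposite which, and classifies the eight ordered pairs $(r,\hat r)$ into the four cases $d \to d+3$, $d+1 \to d+2$, $d+2 \to d+1$, $d+3 \to d$; the claim follows from this enumeration. You instead argue from first principles: in the local coordinate model, opposite octants are antipodal, a path between them crosses each of the three sheets once, and the crossing rule of the Alexander numbering gives $\ind(\hat r)-\ind(r)=\varepsilon_1+\varepsilon_2+\varepsilon_3$ with each sign read off from the coorientation of one sheet. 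Your derivation is more self-contained (it never uses Lemma~\ref{lem:ind}, only checking consistency with it afterwards), and it makes transparent both why opposite regions are exactly the antipodal pairs and why the difference is odd and lies in $\{-3,-1,1,3\}$. What the paper's version buys is immediacy for the sequel: its four cases are literally the labels $\textup{Q}_0,\dots,\textup{Q}_3$ of Notation~\ref{Not:Qmove} used in Proposition~\ref{Prop:Qmove}. Two small imprecisions in your write-up, neither fatal: not every path between antipodal octants crosses each sheet exactly once (a straight segment even passes through the triple point itself), so you should say ``some generic path'' and let the path-independence you already invoked do the rest; and each $\varepsilon_i$ depends on the coorientation \emph{together with} the side on which $r$ lies, so the difference is determined by the coorientations only once the ordered pair $(r,\hat r)$ is fixed---which is the same level of precision as the lemma's statement and the paper's own proof.
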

\begin{proof}
Suppose that the indices of the regions adjacent to  $t$ are $d, d+1, d+1, d+1, d+2, d+2, d+2,$ and $d+3$.  Then the path from $r$ to $\hat{r}$ is classified into four cases,  according to the pair $\ind(r) \to \ind(\hat{r})$ as: 
\begin{itemize}
\item $d \to d+3$ (one path), 
\item $d+1 \to d+2$ (three paths), 
\item $d+2 \to d+1$ (three paths), 
\item $d+3 \to d$ (one path).  
\end{itemize}
There are exactly eight such paths, since there are eight regions adjacent to a given triple point $t$.  
This observation implies the claim.  
\end{proof}
\begin{notation}\label{Not:Qmove}
By definition, any jump \textup{(Q)} involves four sheets, among which three form a triple point, and the fourth sheet passes through the triple point.  By  Lemma~\ref{lem:oprg}, we introduce a refined notation to specify the four types of jump \textup{(Q)},  based on the direction in which the moving sheet $A$ passes from a region $r$ to its opposite region $\hat{r}$.  More concretely, 
\begin{itemize}
\item when $d \to d+3$, we call it a $\textup{Q}_3$-move, 
\item when $d+1 \to d+2$, we call it a $\textup{Q}_2$-move,
\item when $d+2 \to d+1$, we call it a $\textup{Q}_1$-move,
\item when $d+3 \to d$, we call it a $\textup{Q}_0$-move.  
\end{itemize}
\end{notation}
\begin{remark}
For the reader's convenience, we provide a table showing the  correspondence between our $\textup{Q}_i$-moves ($i=0,1,2,3$), defined above, and the classical Goryunov $Q^j$ strata ($j=2,3,4$) defined in terms of the coorientations of sheets. Here, the notation ``opposite $\textup{Q}_i$'' denotes the operation that reverses the direction of the $\textup{Q}_i$-move(Table~\ref{table:Comp}).  

\begin{table}[h!]
\caption{Comparison of Goryunov's notation and ours for strata.}\label{table:Comp}
\begin{tabular}{|c|c|c|}\hline
$Q^j$ strata &   $\textup{Q}_i$-moves & Opposite $\textup{Q}_i$-moves \\ \hline
Negative $Q^4$ stratum & & Opposite   $\textup{Q}_0$-move  \\ \hline
Negative $Q^3$ stratum & $\textup{Q}_3$-move & Opposite $\textup{Q}_1$-move \\ \hline 
 $Q^2$ stratum & $\textup{Q}_2$-move  & Opposite $\textup{Q}_2$-move \\ \hline 
Positive $Q^3$ stratum & $\textup{Q}_1$-move &  Opposite  $\textup{Q}_3$-move \\ \hline 
Positive $Q^4$ stratum & $\textup{Q}_0$-move & \\ \hline
\end{tabular}
\end{table}
\end{remark}
\begin{prop}\label{Prop:Qmove}
Let $S_0$ and $S_1$ be two generic immersions such that $S_1$ is obtained from $S_0$ by a \textup{Q}$_i$-move, where \textup{Q}$_i$-move is defined in ~Notation~\ref{Not:Qmove} $(i=0, 1, 2, 3)$.  
  The function $\St$ changes  under the jumps \textup{(Q)}. More precisely, 
  \[
\St (S_1) -\St(S_0) =
\begin{cases}
2 & \text{under \textup{Q}$_{3}$-move}  \\
0 & \text{under \textup{Q}$_{2}$-move}  \\
-2 & \text{under \textup{Q}$_{1}$-move}  \\
-4 & \text{under \textup{Q}$_{0}$-move} .
\end{cases}
\]
\end{prop}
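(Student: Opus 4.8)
The plan is to reduce everything to an explicit local model around the quadruple point and to compute $\St(S_1)-\St(S_0)$ as a sum of the index changes of the finitely many triple points whose adjacent configuration is affected. First I would fix coordinates so that the three sheets forming the triple point are the coordinate planes $\{x=0\}$, $\{y=0\}$, $\{z=0\}$, meeting at the origin $t_0$, and so that the moving sheet $A$ is a small translate of a plane which at the critical instant passes through the origin; the $\textup{Q}_i$-move is then realized by translating $A$ across the origin. Away from a small ball about the origin nothing changes, so every triple point not involved in the move keeps its index and these contributions cancel in the difference, leaving only the local triple points to analyze.

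Next I would identify the affected triple points. Four sheets in general position produce exactly $\binom{4}{3}=4$ triple points: the persisting point $t_0=B\cap C\cap D$, together with $t_1=A\cap B\cap C$, $t_2=A\cap B\cap D$, $t_3=A\cap C\cap D$, where $A$ meets the three double lines of $B,C,D$. As $A$ sweeps from a region $r$ to its opposite region $\hat r$ (in the sense of Definition~\ref{def:opposite} and Notation~\ref{Not:Qmove}), the three points $t_1,t_2,t_3$ cross from one side of $t_0$ to the other. The key computational device is the opposite-region description of the index: by the remark following Definition~\ref{def:ind}, $\ind(t)=\tfrac12\bigl(\ind(r_t)+\ind(\hat r_t)\bigr)$ for any opposite pair $r_t,\hat r_t$ adjacent to $t$, and by Lemma~\ref{lem:oprg} the admissible jumps $\ind(r_t)\to\ind(\hat r_t)$ are exactly $d\to d+3$, $d+1\to d+2$, $d+2\to d+1$, and $d+3\to d$.

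Then I would compute the four index changes. For each $t_j$ the three sheets through it are unchanged, so by Lemma~\ref{lem:ind} its index equals $d_j+\tfrac32\in\mathbb{Z}$, and the entire effect of the move is the shift of the local minimum $d_j$ caused by $A$ passing from the $r$-side to the $\hat r$-side. I would read off these shifts directly from the $\textup{Q}_i$-type: the label $i$ records precisely the pair $\ind(r)\to\ind(\hat r)$ at $t_0$, and this datum fixes, sheet by sheet, whether the coorientation of $A$ raises or lowers the depth of each $t_j$ (and of $t_0$). Summing the resulting contributions over $t_0,t_1,t_2,t_3$ should yield $3,1,-1,-3$ in the four cases, matching the table; the antisymmetry of the list reflects that reversing the direction of the sweep negates every individual shift, so that $\textup{Q}_3,\textup{Q}_0$ and $\textup{Q}_2,\textup{Q}_1$ pair up with opposite signs.

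The main obstacle is the bookkeeping of these Alexander-numbering shifts, and in particular keeping careful track of how the passage of $A$ alters the numbers of the regions adjacent to the \emph{persisting} triple point $t_0$ as well as those adjacent to the three moving points, so that the half-integer index changes combine to exactly the claimed integer in each case. Carrying out the computation through the single opposite pair of Lemma~\ref{lem:oprg}, rather than through the eight-region average, is what I expect to make this tractable: it isolates the one pair of adjacent regions through which $A$ actually moves and reduces each of the four cases to the signed change of that pair.
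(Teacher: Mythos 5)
Your plan contains the right bookkeeping framework, but the decisive step is missing: the four index shifts and their sum are never computed, only asserted (``should yield $3,1,-1,-3$''). And this is not a harmless omission, because carrying out the computation inside your own framework does not produce those values. Your framework correctly includes the persisting triple point $t_0=t_{X,Y,Z}$, and its index genuinely changes under the move: the moving sheet $A$ sweeps exactly once across every one of the eight regions locally adjacent to $t_0$, so all eight Alexander numbers shift by the same $\pm 1$ (the sign depending on the coorientation of $A$ relative to the direction of the sweep), and hence $\ind(t_0)$ changes by $\pm 1$. Meanwhile each moving triple point $t_j$ ($j=1,2,3$) slides along a double line of the fixed sheets and crosses exactly one fixed sheet during the move, so its index changes by $\mp 1$ according to that sheet's coorientation. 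The total change of $\St$ is therefore a sum of four terms each equal to $\pm 1$, which is always \emph{even}; it can never equal $\pm 1$ or $\pm 3$. So your route, honestly completed, cannot terminate at the table in the Proposition; the antisymmetry observation at the end is correct but does not rescue the values.

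This is precisely where your proposal diverges from the paper's own proof: the paper computes the difference $\St(S_1)-\St(S_0)$ as a sum over only the \emph{six} triple points $t_{A,X,Y},t_{A,Y,Z},t_{A,Z,X}$ and $t_{A',X,Y},t_{A',Y,Z},t_{A',Z,X}$ involving the moving sheet, i.e.\ it treats $\ind(t_{X,Y,Z})$ as identical before and after the jump, and that omission is exactly how it arrives at the odd values $3,1,-1,-3$. Your insistence on summing over $t_0,t_1,t_2,t_3$ is the correct application of Definition~\ref{def:ind} to the actual Alexander numbering, but it is incompatible with the claimed table. To turn your sketch into a proof of the Proposition as stated you would have to show that $\ind(t_0)$ is unchanged by the jump, and the sweep argument above shows the opposite. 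You should therefore not paper over this with ``should yield'': either the $t_0$ term must be justified away (it cannot be, by the uniform-shift argument), or the discrepancy with the stated values---and with the paper's proof, which silently drops that term---must be confronted head on.
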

\begin{proof}
Let $X$, $Y$, and $Z$ denote the three sheets that form the triple point appearing in the Q$_i$-move ($i=0, 1, 2, 3$), and let $A$ and $A'$ denote the moving sheets.  Let $t_{X,Y,Z}$ be the triple point formed by $\{X, Y, Z\}$.   Similarly, define $t_{A,X,Y}$, $t_{A,Y,Z}$, and $t_{A,Z,X}$ as the triple points formed by $\{A,X,Y\}$, $\{A,Y,Z\}$, and $\{A,Z,X\}$, respectively.
Fixing the coorientation of $A$ does not affect the generality of the argument.
Let $d$ denote the $\ind$ value of the tetrahedron appearing in the figure before the Q$_i$-move.
  The pattern of change in $\St$ depends on the coorientations of the sheets $X$, $Y$, and $Z$.  Hence, 
  we consider four possible cases, described in Notation~\ref{Not:Qmove}.  
Although $t_{X,Y,Z}$ refers to the same point before and after the move, we denote the triple point after the move by $t'_{X,Y,Z}$ to distinguish the value of $\ind$.

Let us first consider the Q$_3$-move. By the definition of coorientations of sheets, the distribution of $\ind$ values over the regions before and after the Q$_3$-move is illustrated in Figure~\ref{fig:DiffQ3}.  
\begin{figure}[htbp] 
    \vspace{-3cm}
 \includegraphics[width=16cm]{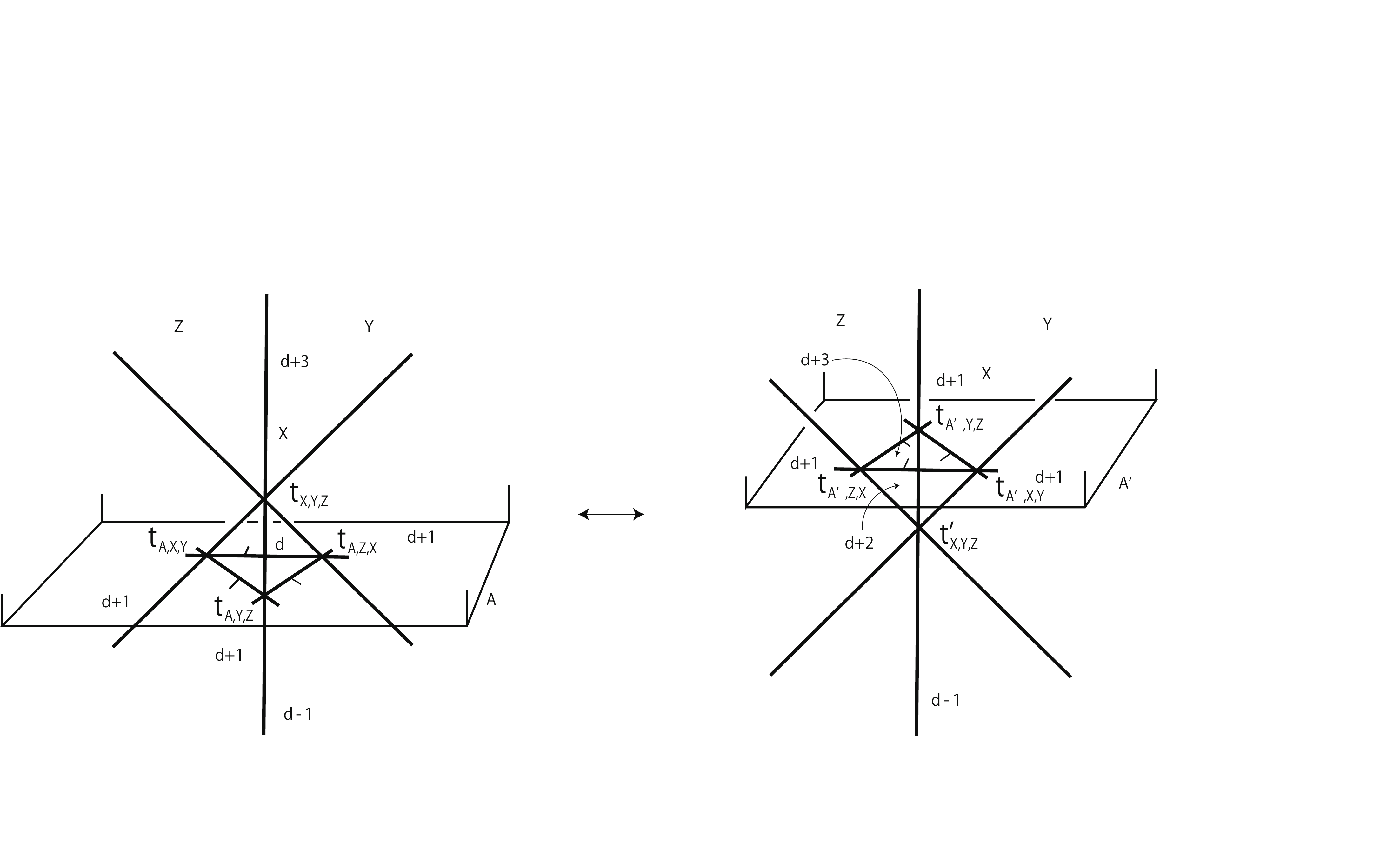}
 \vspace{-1cm}
 \caption{The indices for a single Q$_3$-move.}
   \label{fig:DiffQ3}
\end{figure}
In particular, the indices take the following values:
\[
  \begin{alignedat}{8}
    \ind(t_{A',X,Y}) &= d + \frac{3}{2} ,&\quad
      \ind(t_{A',Y,Z}) &= d + \frac{3}{2} ,&\quad
      \ind(t_{A',Z,X}) &= d + \frac{3}{2} ,&\quad
      \ind(t'_{X,Y,Z}) &= d + \frac{1}{2},\\
    \ind(t_{A,X,Y})  &= d + \frac{1}{2} ,&\quad
      \ind(t_{A,Y,Z})  &= d + \frac{1}{2} ,&\quad
      \ind(t_{A,Z,X})  &= d + \frac{1}{2}, &\quad
      \ind(t_{X,Y,Z}) &= d+ \frac{3}{2}.
  \end{alignedat}
\]
  The variation of $\St$ across the Q$_3$-move is as follows:
\begin{align*}
  \St(S_1) - \St(S_0)
  &= \ind(t_{A',X,Y}) + \ind(t_{A',Y,Z}) + \ind(t_{A',Z,X}) + \ind(t'_{X,Y,Z}) \\
  &\quad - \ind(t_{A,X,Y}) - \ind(t_{A,Y,Z}) - \ind(t_{A,Z,X}) - \ind(t_{X,Y,Z}) \\
  &=(d + \tfrac{3}{2}) + (d + \tfrac{3}{2}) + (d + \tfrac{3}{2}) + (d + \tfrac{1}{2}) \\
  &\quad - (d + \tfrac{1}{2}) - (d + \tfrac{1}{2}) - (d + \tfrac{1}{2}) - (d + \tfrac{3}{2}) \\
  &= 2.
\end{align*}

Next, we consider the second case, namely the Q$_2$-move. 
This case is obtained from the Q$_3$-move by reversing the coorientation of one of the sheets $\{X,Y,Z \}$. Due to the symmetry around the triple point $t_{X,Y,Z}$, the choice of which sheet to reverse does not affect the generality of the argument.  Let us assume that the coorientation of the sheet $X$ is reversed.  In this case, the values of $\ind$ assigned to the regions differ from those in the case of the Q$_3$-move. Using  Lemma~\ref{lem:ind}, we determine the distribution of $\ind$ values over the regions, as shown in Figure~\ref{fig:DiffQ2}.   
\begin{figure}
\vspace{-3cm}
\includegraphics[width=16cm]{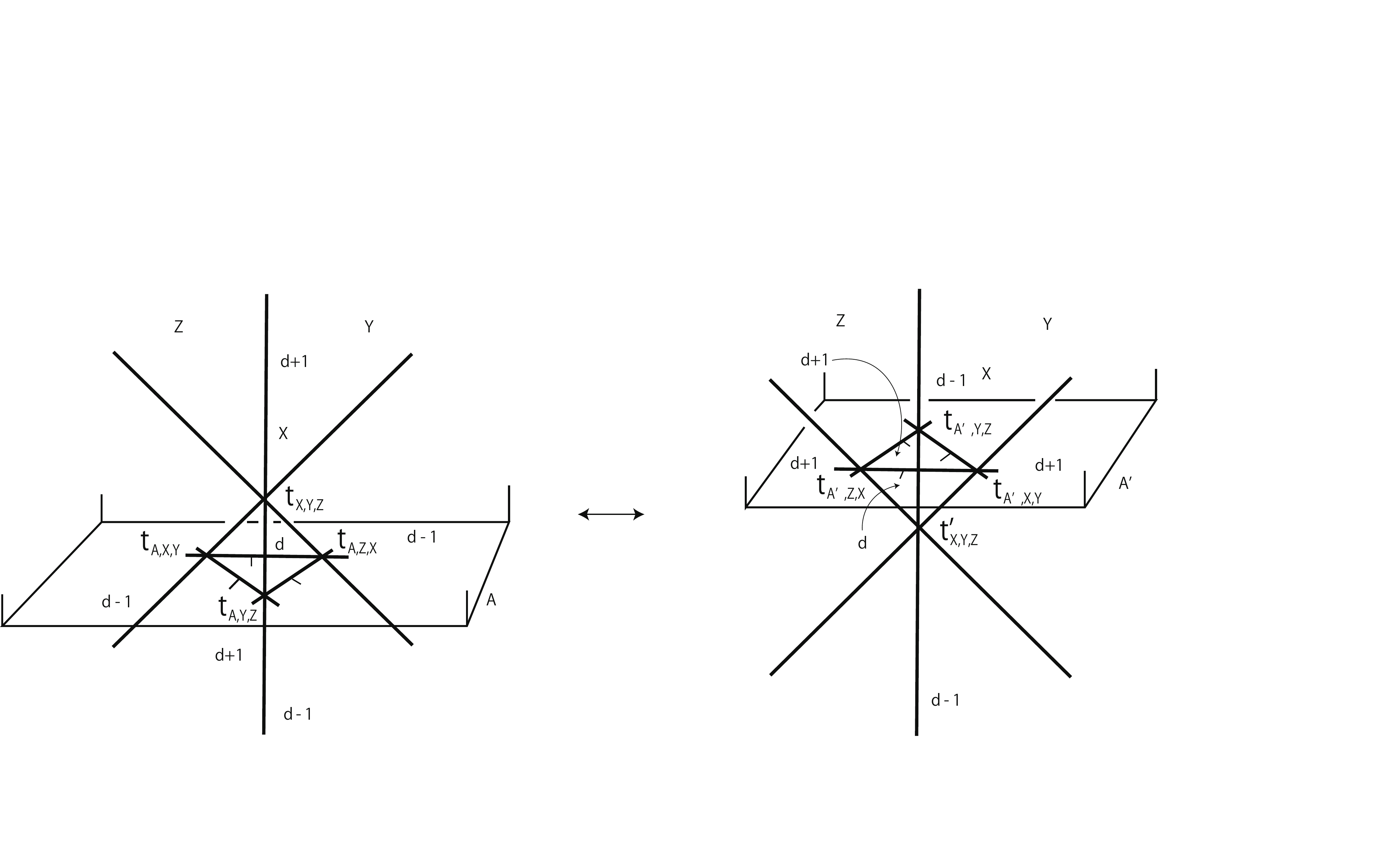}
\vspace{-1cm}
\caption{Indices for a single Q$_2$-move.}\label{fig:DiffQ2}
\end{figure}
The values of $\ind$ of triple points in this distribution are as follows:
 \[
  \begin{alignedat}{8}
    \ind(t_{A',X,Y}) &= d + \frac{1}{2} ,&\quad
      \ind(t_{A',Y,Z}) &= d - \frac{1}{2} ,&\quad
      \ind(t_{A',Z,X}) &= d + \frac{1}{2} ,&\quad
      \ind(t'_{X,Y,Z}) &= d - \frac{1}{2},\\
    \ind(t_{A,X,Y})  &= d - \frac{1}{2} ,&\quad
      \ind(t_{A,Y,Z})  &= d + \frac{1}{2} ,&\quad
      \ind(t_{A,Z,X})  &= d - \frac{1}{2} ,&\quad
      \ind(t_{X,Y,Z}) &= d + \frac{1}{2}.
  \end{alignedat}
\]
  Therefore, we have the following:
\begin{align*}
  \St(S_1) - \St(S_0)
  &= \ind(t_{A',X,Y}) + \ind(t_{A',Y,Z}) + \ind(t_{A',Z,X}) + \ind(t'_{X,Y,Z}) \\
  &\quad - \ind(t_{A,X,Y}) - \ind(t_{A,Y,Z}) - \ind(t_{A,Z,X}) - \ind(t_{X,Y,Z}) \\
  &= (d + \tfrac{1}{2}) + (d - \tfrac{1}{2}) + (d + \tfrac{1}{2}) + (d - \tfrac{1}{2}) \\
  &\quad - (d - \tfrac{1}{2}) - (d + \tfrac{1}{2}) - (d - \tfrac{1}{2}) - (d + \tfrac{1}{2}) \\
  &= 0.
\end{align*}

The Q$_1$-move is the opposite direction of the Q$_3$-move.
In this case, the change in the value of $\St$ simply follows the reverse of the Q$_3$ case.


  The case of the Q$_0$-move is obtained from the Q$_3$-move by reversing coorientations of $X$,$Y$, and $Z$.  
The distribution of the values of $\ind$ over regions in this case is shown in Figure~\ref{fig:DiffQ0}.  
\begin{figure}
    \vspace{-3cm}
    \includegraphics[width=16cm]{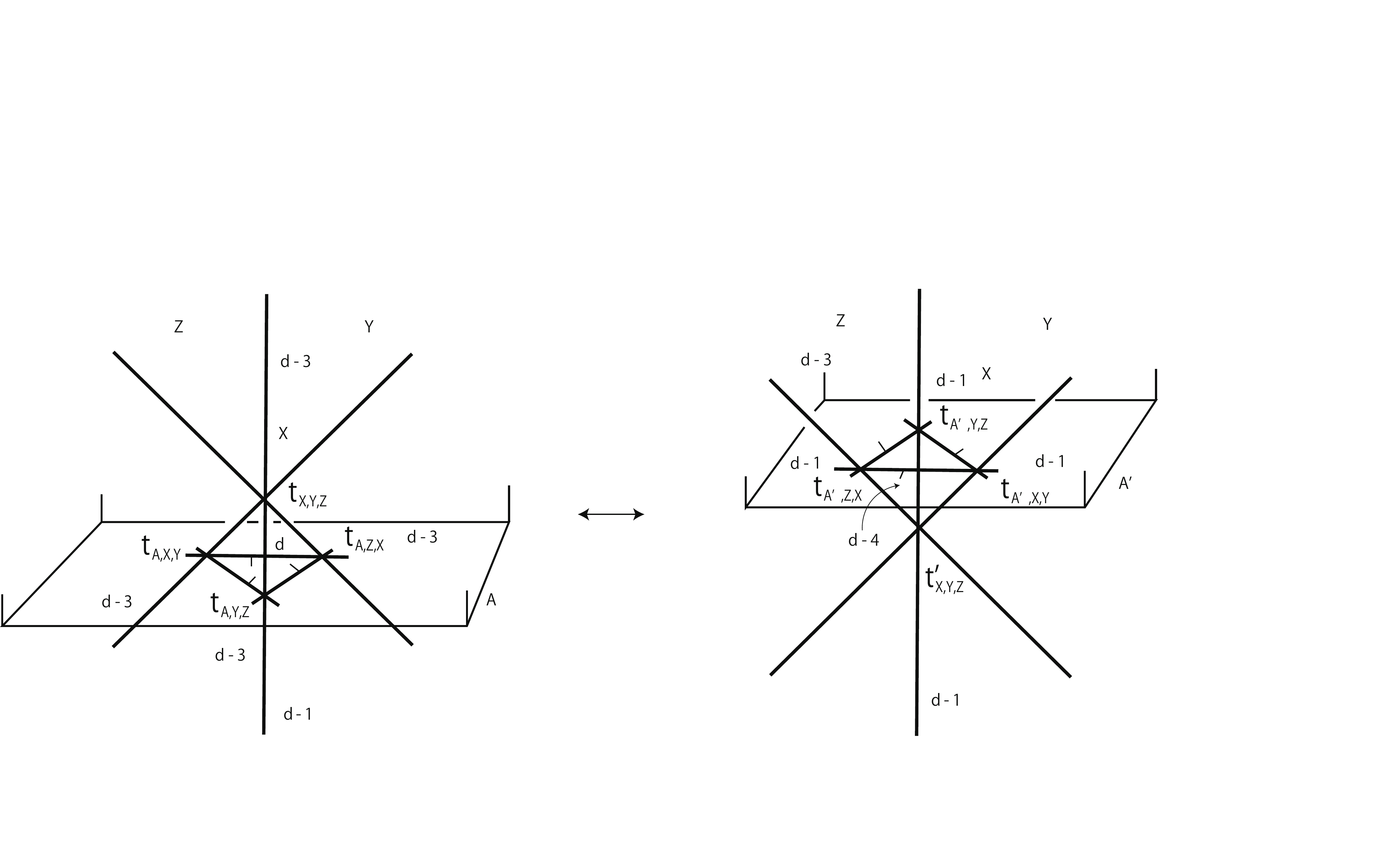}
    \vspace{-1cm}
    \caption{The indices for a single Q$_0$-move.}\label{fig:DiffQ0}
\end{figure}
As in the previous cases, we compute the $\ind$ of the triple points as follows:
 \[
  \begin{alignedat}{6}
    \ind(t_{A',X,Y}) &= d - \frac{5}{2} ,&\quad
      \ind(t_{A',Y,Z}) &= d - \frac{5}{2} ,&\quad
      \ind(t_{A',Z,X}) &= d - \frac{5}{2} ,&\quad
      \ind(t'_{X,Y,Z}) &= d - \frac{5}{2},\\
    \ind(t_{A,X,Y})  &= d - \frac{3}{2} ,&\quad
      \ind(t_{A,Y,Z})  &= d - \frac{3}{2} ,&\quad
      \ind(t_{A,Z,X})  &= d - \frac{3}{2}&\quad
      \ind(t_{X,Y,Z}) &= d - \frac{3}{2}.
  \end{alignedat}
\]
  Consequently, we obtain: 
\begin{align*}
  \St(S_1) - \St(S_0)
  &=  \ind(t_{A',X,Y}) + \ind(t_{A',Y,Z}) + \ind(t_{A',Z,X}) + \ind(t'_{X,Y,Z}) \\
  &\quad - \ind(t_{A,X,Y}) - \ind(t_{A,Y,Z}) - \ind(t_{A,Z,X}) - \ind(t_{X,Y,Z}) \\
  &= (d - \tfrac{5}{2}) + (d - \tfrac{5}{2}) + (d - \tfrac{5}{2}) + (d - \tfrac{5}{2}) \\
  &\quad - (d - \tfrac{3}{2}) - (d - \tfrac{3}{2}) - (d - \tfrac{3}{2}) - (d - \tfrac{3}{2}) \\
  &= -4.
\end{align*}
 \end{proof}
 
We now complete a proof of Theorem~\ref{thmSurface}.  
\begin{proof}
First, $\St$ is preserved under any jump \textup{(E)} and any jump \textup{(H)}, since no triple points are involved in these moves.  
Second, by Proposition~\ref{Prop:Tmove}, $\St$ changes by $2\ind(t)$ under the jump \textup{(T)}, where this $t$ refers to one of the two triple points that appear or disappear in a single (\textup{T}).    
Third, Proposition~\ref{Prop:Qmove} implies that each jump \textup{(Q)} changes the value of $\St$ as in the Table~\ref{table:ValueOnly}:

\begin{table}[h!]
\caption{Changes in $\St$ under each case of quadruple point jumps.}\label{table:ValueOnly}
\begin{tabular}{|c|c|c|c|}\hline
$Q^j$ strata&Q$_i$-moves&Opposite Q$_i$-moves & Changes in  $\St$ \\ \hline
Negative $Q^4$ & & Opposite $\textup{Q}_0$ & $+4$ \\ \hline
Negative $Q^3$ & $\textup{Q}_3$ & Opposite $\textup{Q}_1$ & $+2$ \\ \hline
 $Q^2$ & $\textup{Q}_2$ & $\textup{Q}_2$ & $0$ \\ \hline
Positive $Q^3$ &$\textup{Q}_1$ & Opposite $\textup{Q}_3$ & $-2$ \\ \hline
Positive $Q^4$ & $\textup{Q}_0$& & $-4$ \\ \hline
\end{tabular}
\end{table}

\noindent This table completes the proof of the claim.     
\end{proof}
\section{Applications}\label{sec:Appl}
In this section, we apply the invariant $\St$ (Theorem~\ref{thmSurface}) to  sphere eversions ($\Sigma=S^2$).  
\begin{corollary}[$S^2$ version of Theorem~\ref{thmSurface}]\label{thmSphere}
Let $S$ be a generic sphere immersion $S^2 \looparrowright \mathbb{R}^3$; $T(S)$ the set of triple points of $S$; and $\ind(t)$ the Alexander numbering of the triple point $t$,  defined as the average of the indices over the eight regions adjacent to $t$. 
Then the value
\[\St(S)= \sum_{t \in T(S)} \ind(t)
\]
behaves under jump singularities as in Table~\ref{table:ValueOnly}.  
For \textup{(T)}, the change is an integer $2n$ that is $2 \ind(t)$ of the   triple points   created during a jump \textup{(T)}.
Moreover, $\St$ remains unchanged under the other types of singularity jumps:  \textup{(E)} and \textup{(H)}.
\end{corollary}

Let $|T(S)|$ be the cardinality of the set $T(S)$.  
We detect (\textup{T}), as well as positive or negative $Q^3$ and positive or negative $Q^4$, that occur during a smooth sphere eversion process, by applying the function $\frac{1}{2}\St$ as follows: 
\begin{enumerate}
\item If $|T(S)|$ changes, then the change corresponds to the appearance or disappearance of a pair of triple points created or eliminated by a jump (\textup{T}), both having the same value $\ind(t)=n$.  
\item If $|T(S)|$ does not change, then $\St$ changes depending on the cases of Goryunov's $Q^i$.  In particular, $Q^3$ and $Q^4$ affect  $\frac{1}{2}\St$, while $Q^2$ does not affect $\frac{1}{2}\St$, as  shown in Table~\ref{table:halfSt}: 
\begin{table}[h!]
\caption{Changes in $\frac{1}{2}\St$ under each case of quadruple point jumps.}\label{table:halfSt}
\begin{tabular}{|c|c|}\hline
Cases  & Changes in $\frac{1}{2}\St$ \\ \hline
Negative $Q^4$  & $+2$ \\ \hline
Negative $Q^3$  & $+1$ \\ \hline
 $Q^2$  & $0$ \\ \hline
Positive $Q^3$  & $-1$ \\ \hline
Positive $Q^4$  & $-2$ \\ \hline
\end{tabular}
\end{table}
\end{enumerate}
As a corollary, we have Proposition~\ref{propAppl}.  
\begin{prop}\label{propAppl}
Let $S$ be an immersion of the $2$-sphere that occurs during a sphere eversion.  Then:
\begin{itemize}
    \item $\#\{$occurrences of \textup{(T)} before reaching $S \} \ge \frac{1}{2}|T(S)|$.  
    \item Let $\mathcal{T}$ be the set of triple points associated with jumps \textup{(T)} that occurred before reaching $S$. Then the difference  
    \[
    \frac{1}{2}\St(S) - \sum_{t \in \mathcal{T}} \varepsilon(t)\ind(t)
    \]
    represents the contribution to $\St$ arising solely from jumps \textup{(Q)}, where the sign $\varepsilon(t)$ is determined by the direction of the jump \textup{(T)}: it is $+1$ if the triple point $t$ is created, and $-1$ if it is eliminated.
    \item Since $\St$   changes by at most $2$ per jump \textup{(Q)}, the value
    \[
     \# \{ \textit{occurrences of } \textup{(Q)} \textit{ before reaching S} \} \ge
    \frac{1}{2}\left|\frac{1}{2}\St(S) - \sum_{t \in \mathcal{T}} \varepsilon(t) \ind(t)\right|. 
    \]

\end{itemize}
\end{prop}

\section*{Acknowledgement}
The authors would like thank Professor Keiichi Sakai for his comments.  

\bibliographystyle{plain}
\bibliography{hStRef}

@incollection {Goryunov1997,
    AUTHOR = {Goryunov, V. V.},
     TITLE = {Local invariants of mappings of surfaces into three-space},
 BOOKTITLE = {The {A}rnold-{G}elfand mathematical seminars},
     PAGES = {223--255},
 PUBLISHER = {Birkh\"{a}user Boston, Boston, MA},
      YEAR = {1997},
   MRCLASS = {57R42 (57M99)},
  MRNUMBER = {1429894},
MRREVIEWER = {Jim A. Bryan},
       DOI = {10.1007/978-1-4612-4122-5\_11},
       URL = {https://doi.org/10.1007/978-1-4612-4122-5_11},
}

@book {Arnold1994Book,
    AUTHOR = {Arnol\cprime d, V. I.},
     TITLE = {Topological invariants of plane curves and caustics},
    SERIES = {University Lecture Series},
    VOLUME = {5},
      NOTE = {Dean Jacqueline B. Lewis Memorial Lectures presented at
              Rutgers University, New Brunswick, New Jersey},
 PUBLISHER = {American Mathematical Society, Providence, RI},
      YEAR = {1994},
     PAGES = {viii+60},
      ISBN = {0-8218-0308-5},
   MRCLASS = {57M25 (53A04 57-02 57R42 58C28)},
  MRNUMBER = {1286249},
MRREVIEWER = {Louis Funar},
       DOI = {10.1090/ulect/005},
       URL = {https://doi.org/10.1090/ulect/005},
}

@incollection {NelsonBanchoff1981,
    AUTHOR = {Max, N. and Banchoff, T.},
     TITLE = {Every sphere eversion has a quadruple point},
 BOOKTITLE = {Contributions to analysis and geometry ({B}altimore, {M}d.,
              1980)},
     PAGES = {191--209},
 PUBLISHER = {Johns Hopkins Univ. Press, Baltimore, MD},
      YEAR = {1981},
   MRCLASS = {57R42},
  MRNUMBER = {648465},
MRREVIEWER = {Jo\v{z}e Vrabec},
}

@article {Smale1958,
    AUTHOR = {Smale, S.},
     TITLE = {A classification of immersions of the two-sphere},
   JOURNAL = {Trans. Amer. Math. Soc.},
  FJOURNAL = {Transactions of the American Mathematical Society},
    VOLUME = {90},
      YEAR = {1958},
     PAGES = {281--290},
      ISSN = {0002-9947},
   MRCLASS = {55.00},
  MRNUMBER = {104227},
MRREVIEWER = {P. J. Hilton},
       DOI = {10.2307/1993205},
       URL = {https://doi.org/10.2307/1993205},
}
\end{document}